\newtheorem{thm}{Theorem}[section]
\newtheorem{prop}[thm]{Proposition}
\newtheorem{thmext}{Theorem}
\theoremstyle{definition}
\numberwithin{equation}{section}
\begin{document}
\baselineskip=17pt

\title[Friable integers and linear forms]{On the representation of friable integers by linear forms}

\author[A. Lachand]{Armand Lachand}
\address{
Institut \'Elie Cartan\\ Universit\'e de Lorraine\\ 
 B.P. 70239 \\
54506 Vand\oe uvre-l\`es-Nancy Cedex, France}
\email{armand.lachand@gmail.com}

\date{}

\begin{abstract}
Let $P^+(n)$ denote the largest prime of the integer $n$. Using the 
 nilpotent Hardy-Littlewood method developed by Green and Tao, we give an asymptotic formula for  
\begin{multline*}
\Psi_{F_1\cdots F_t}\left(\mathcal{K}\cap[-N,N]^d,N^{1/u}\right):=
\#\left\{\boldsymbol{n}\in \mathcal{K}\cap[-N,N]^d:\vphantom{P^+(F_1(\boldsymbol{n})\cdots F_t(\boldsymbol{n}))\leq N^{1/u}}\right.\\
\left.P^+(F_1(\boldsymbol{n})\cdots F_t(\boldsymbol{n}))\leq N^{1/u}\right\}
\end{multline*} where  $(F_1,\ldots,F_t)$ is a system of affine-linear forms of $\mathbf{Z}[X_1,\ldots,X_d]$ no two 
of which are affinely related and $\mathcal{K}$ is a convex body. This improves upon Balog, Blomer, Dartyge and Tenenbaum's work~\cite{BBDT12} 
in the case of product of linear forms.
\end{abstract}

\subjclass[2010]{Primary 11N25; Secondary 11N37}

\keywords{Friables integers, linear forms, Gowers norms}

\maketitle

\section{Introduction and statement of the result}

Given a real number $y>1$, an integer $n$ is said to be $y$-friable if its greatest prime factor, denoted by $P^+(n)$, satisfies $P^+(n)\leq y$ with the 
conventions $P^+(\pm1)=1$ and $P^+(0)=0$. Conversely, an integer $n$ is called $y$-sifted if its smallest prime factor, denoted by $P^-(n)$, satisfies $P^-(n)>y$ 
with the conventions $P^-(\pm1)=+\infty$ and $P^-(0)=0$. Due to the duality beetwen sifted integers and friable integers, such integers  occur in several places in number 
theory and their distribution has been intensively studied  (see \cite{HT93} and \cite{Gr08} for  survey articles related to integers without large prime factors). 
A theorem of Hildebrand~\cite{Hi86}, related to the number $\Psi(N,y)$ of $y$-friable integers  smaller than $N$, asserts that, for any $\varepsilon>0$ and uniformly in 
the domain \begin{equation}
N\geq 3\quad\text{and}\quad 1\leq u \leq \frac{\log N}{(\log\log N)^{5/3+\varepsilon}} 
,
\end{equation}
we have the asymptotic formula  
\begin{equation}\label{Hildebrand formula}
\Psi\left(N,N^{1/u}\right)=N\rho(u)\left(1+O\left(\frac{u\log(u+1)}{\log N}\right)\right)
\end{equation} 
where $\rho$ is the Dickman function, namely the unique solution  to  the delay differential equation
\begin{displaymath}
 \left\{\begin{array}{ll}\rho(u)=1&\text{if }0\leq u\leq 1,\\
        u\rho'(u)+\rho(u-1)=0&\text{if }u>1.\end{array}
                                 \right.
\end{displaymath}

 Given $F\in\mathbf{Z}[X_1,\ldots,X_d]$  and $\mathcal{K}\subset\mathbf{R}^d$, the study of the cardinality
\begin{displaymath}
\Psi_F
\left(\mathcal{K}
,y\right)
:=\#\left\{\boldsymbol{n}\in \mathcal{K}\cap\mathbf{Z}^d
: P^+(F(\boldsymbol{n}))\leq y\right\}
\end{displaymath}
is an interesting question. In particular, the factorization algorithm Number Field Sieve (NFS)\footnote{The interested reader may find a description of this algorithm 
in [\cite{CP05}, Chapter~6].} 
rests on the assumption that the cardinality
$
\Psi_F
\left(\mathcal{K},y\right)
$
is sufficiently large for some  small $y$, for $F\in\mathbf{Z}[X_1,X_2]$  and $\mathcal{K}\subset\mathbf{R}^2$ a sufficiently regular compact set.

Let 
 $F=F_1^{k_1}\cdots F_t^{k_t}$ be the decomposition of $F$ with $F_1,\ldots,F_t$ the distinct irreducible factors of $F$ and
   $d_1,\ldots,d_t$ their respective degrees with $d_1\geq \ldots\geq d_t\geq1$. If we assume  the events "$F_i(\boldsymbol{n})$ is $y$-friable" to be independent, 
 then  (\ref{Hildebrand formula}) leads to the following conjecture
\begin{equation}\label{conjecture friable binary form}
\Psi_F\left([0,N]^d,N^{1/u}\right)
\underset{N\rightarrow+\infty}{\sim} N^d\rho(d_1u)\cdots\rho(d_tu)
\end{equation}
for any fixed $u>0$.

When $d=2$, the author proved the validity of (\ref{conjecture friable binary form}) for  an irreducible cubic form $F$ or for $F=F_1F_2$ where $F_1$ is a linear form 
and $F_2$ is an irreducible quadratic form~\cite{Lacub, La15}. For general binary forms $F$, such a formula seems beyond reach but there exist some partial results  
 for estimating $\Psi_F\left([0,N]^2,N^{1/u}\right)$ when $u$ is sufficiently small.
 In \cite{BBDT12}, Balog, Blomer, Dartyge and Tenenbaum proved the existence of a constant $\alpha_F> 1/d_1$ such that, 
 for any $\varepsilon>0$ and uniformly for $N\geq2$, we have 
\begin{equation}\label{lower bound BBDT}
\Psi_F\left([0,N]^2,N^{1/\alpha_F+\varepsilon}\right)\gg_{\varepsilon} N^2.
\end{equation}

Let $d\geq2$ and $t\geq1$ be integers. In this paper, we focus on binary forms $F=F_1\cdots F_t$ where $F_1,\ldots ,F_t$ are 
some affine-linear forms in $\mathbf{Z}[X_1,\ldots, X_d]$. The cases $d=2$ and $t\in\{1,2\}$  can  be deduced from  results of \cite{FT91} related to 
the distribution of friable integers in arithmetic progressions. The case $d=2$ and $t=3$ was essentially considered by a succession of articles of 
various authors~(\cite{Br99,LS12,BG14,Dr13,Dr15,Haun}). 
In [\cite{Haun}, Corollary~1], Harper used the Hardy-Littlewood circle method to show the existence of $c>0$ such that, uniformly for $
N\geq2$ and $y\geq(\log N)^c$,  we have
\begin{align*}
\Psi_{X_1X_2(X_1+X_2)}\left(\mathcal{K}(N),y\right)
\underset{N\rightarrow+\infty}{\sim}\mathfrak{S}_0(\alpha,y)\mathfrak{S}_1(\alpha)\frac{\Psi\left(N,y\right)^3}{N}
\end{align*}
where $\mathcal{K}(N)=\left\{1\leq n_1,n_2\leq N:n_1+n_2\leq N\right\}$, $\alpha:=\alpha(N,y)$ denotes the unique real solution of the equation
\begin{displaymath}
\sum_{p\leq y}\frac{\log p}{p^{\alpha}-1}=\log N,
\end{displaymath}
\begin{displaymath}
\mathfrak{S}_0(\alpha,y):=\prod_{p\leq y}
\left(1+
\frac{(p-p^{\alpha})^3}{p(p-1)^2(p^{3\alpha-1}-1)}\right)
\prod_{p>y}\left(1-\frac{1}{(p-1)^2}\right)
\end{displaymath}
and\begin{displaymath}
\mathfrak{S}_1(\alpha):=\int_0^{1}\int_0^{1-t_1}\alpha^3
(t_1t_2(t_1+t_2))^{\alpha-1}\mathop{}\mathopen{}\mathrm{d} t_2\mathop{}\mathopen{}\mathrm{d} t_1.
\end{displaymath}

The celebrated work of Green, Tao and Ziegler~\cite{GT10,GT12a, GT12b,  GTZ12} provides a scheme - the so-called  nilpotent Hardy-Littlewood method - to 
get  asymptotic estimations of the average value
\begin{equation}\label{mean value h}
M_{F_1\cdots F_t}(\mathcal{K};h):=\sum_{\boldsymbol{n}\in\mathcal{K}\cap\mathbf{Z}^d}h(F_1(\boldsymbol{n}))\cdots h(F_t(\boldsymbol{n}))
\end{equation}
for  any system of affine-linear forms $F_1,\ldots, F_t\in\mathbf{Z}[X_1,\ldots, X_d]$ such that no two forms are affinely related and for any arithmetic function $h$
with a quasi-random behaviour. 
 In recent years, this approach has been applied successfully for several functions including the von Mangoldt functions $\Lambda$ (this gives a partial resolution 
 of the generalized 
 Hardy-Littlewood conjecture \cite{GT10}), the Liouville function $\lambda$ or the M\"obius function $\mu$~\cite{GT10}, the divisor function $\tau$~\cite{Ma12a},
 the function $r_G$ which counts the number of representations of a binary quadratic form $G$ \cite{Ma12b,Ma13} or, very recently, any multiplicative function that
 takes values in the unit  disk~\cite{FHun}.
 
In this work, we study how the nilpotent Hardy-Littlewood method may be  applied to get an asymptotic formula for (\ref{mean value h}) when $h=1_{S\left(N^{1/u}\right)}$ 
is the indicator function  of the  $N^{1/u}$-friable integers  for bounded $u\geq1$. Such a question is not covered 
by Frantzikinakis and Host work~\cite{FHun} since $h$ depends on $N$ in the present case. 
The main result is the following theorem.

\begin{thm}\label{main theorem n*1} Let $N,L,d,t$ and $u_0$ be some positive integers. Suppose that $F=(F_1,\ldots,F_t):\mathbf{Z}^d\rightarrow\mathbf{Z}^t$
 is a system of affine-linear forms such that any two forms $F_i$ and $F_j$ are affinely independent over $\mathbf{Q}$ and  the  non-constant coefficients of the  $F_i$ are bounded by $L$. 
 Then, for any convex body $\mathcal{K}\subset[-N,N]^d$ 
 such that 
 $F(\mathcal{K})\subset [0,N]^t$ and for any  $ u_1,\ldots, u_t\in[0, u_0]$, we have
\begin{align*}
\sum_{\boldsymbol{n}\in\mathcal{K}\cap\mathbf{Z}^d}1_{S\left(N^{1/u_1}\right)}(F_1(\boldsymbol{n}))\dots 1_{S\left(N^{1/u_t}\right)}(F_t(\boldsymbol{n}))=&
\mathrm{Vol}(\mathcal{K})\prod_{i=1}^t\rho(u_i)
+o(N^d) 
\end{align*}
where the implicit constant depends only on $t,d,L$ and  $u_0$ and $S(y)$ denotes the set of $y$-friable integers.
\end{thm}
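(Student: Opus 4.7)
The plan is to apply the nilpotent Hardy--Littlewood method of Green, Tao, and Ziegler, whose central ingredient is the generalised von Neumann theorem: for any system of affinely-independent affine-linear forms $(F_1,\ldots,F_t)$ on $\mathbf{Z}^d$ there exists a Cauchy--Schwarz complexity $s=s(t,d)$ such that the multilinear average $\sum_{\boldsymbol{n}\in\mathcal{K}\cap\mathbf{Z}^d} f_1(F_1(\boldsymbol{n}))\cdots f_t(F_t(\boldsymbol{n}))$ is $o(N^d)$ as soon as each $f_i$ is bounded by a pseudo-random majorant on $[1,N]$ and at least one of them has Gowers norm $\|f_i\|_{U^{s+1}[N]} = o(1)$. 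Since $1_{S(N^{1/u})}$ is trivially dominated by $1$, no sieve majorant is needed here and the constant majorant is enough, which simplifies matters considerably compared to the primes.

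First I would decompose
\[1_{S(N^{1/u_i})}(n) = \rho(u_i)\mathbf{1}_{[1,N]}(n) + g_{u_i}(n),\]
expand $\prod_i(\rho(u_i)\mathbf{1}_{[1,N]}+g_{u_i})(F_i(\boldsymbol{n}))$ into $2^t$ summands, and treat them separately. The ``all main'' term produces
\[\Bigl(\prod_i\rho(u_i)\Bigr)\cdot\#\{\boldsymbol{n}\in\mathcal{K}\cap\mathbf{Z}^d:F(\boldsymbol{n})\in[1,N]^t\} = \mathrm{Vol}(\mathcal{K})\prod_i\rho(u_i)+o(N^d),\]
by a standard lattice-point count for the convex body $\mathcal{K}$ (whose image under $F$ lies in $[0,N]^t$ by hypothesis). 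Every remaining summand carries at least one factor $g_{u_j}\circ F_j$, so by the generalised von Neumann theorem it contributes only $o(N^d)$ provided one establishes the uniform Gowers-uniformity estimate
\[\|g_u\|_{U^{s+1}[N]}=o(1)\quad (N\to\infty),\ \text{uniformly for }u\in[0,u_0].\]

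The main obstacle is precisely this uniform Gowers-norm bound, which should occupy the bulk of the paper. By the inverse theorem of Green--Tao--Ziegler, it reduces to controlling the correlation $\sum_{n\leq N} g_u(n)\overline{F(g(n)\Gamma)}$ against an arbitrary nilsequence on a nilmanifold of bounded step, dimension, and complexity. To obtain this I would exploit the arithmetic structure of $1_{S(N^{1/u})}$ through a combinatorial identity of Vaughan or Heath--Brown type, reducing matters to ``Type I'' linear sums and ``Type II'' bilinear sums tested against the nilsequence. These can then be controlled via the quantitative equidistribution theory for polynomial orbits on nilmanifolds (Green--Tao), combined with a saddle-point description of $\rho(u)$ in the Hildebrand--Tenenbaum style that justifies $\rho(u_i)$ as the correct main density. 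The most delicate point will be maintaining uniformity in $u$ and in the nilpotent data, so that the sieve identities and equidistribution estimates survive throughout the regime $u\in[0,u_0]$ with $y=N^{1/u}$ possibly as large as $N$.
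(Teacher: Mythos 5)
Your high-level scheme --- write $1_{S(N^{1/u_i})} = \rho(u_i)\mathbf{1}_{[1,N]} + g_{u_i}$, expand the product into $2^t$ terms, handle the all-main term by a lattice-point count giving $\mathrm{Vol}(\mathcal{K})\prod_i\rho(u_i)+O(N^{d-1})$, dispatch the remaining terms via the generalised von Neumann theorem once a uniform Gowers bound $\|g_u\|_{U^{s+1}[N]}=o(1)$ is known, and obtain that bound from the inverse theorem by establishing non-correlation of $g_u$ with bounded-complexity nilsequences --- is exactly the paper's (Sections~2--3). The paper even notes that this non-correlation can be obtained as a black box from Matthiesen's orthogonality theorem for multiplicative functions \cite{Mattun} after checking the hypotheses via Fouvry--Tenenbaum \cite{FT91}, so your outline is not wrong in spirit.

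Where you go astray is the key technical step, which is left vague and pointed at the wrong tools. You propose a ``Vaughan or Heath--Brown type'' identity leading to Type~I \emph{and} Type~II sums. The friable indicator admits something much simpler and this is the crux of the paper: the elementary friable--sifted duality
\begin{equation*}
1_{S(N^{1/u})}(n)=\sum_{\substack{k\mid n\\ P^-(k)>N^{1/u}}}\mu(k),
\end{equation*}
followed by approximating $1_{k\mid n}$ by $1/k$ for $k\le N^{1-\tau}$ (the tail $k>N^{1-\tau}$ is trivially small, see~(3.2)--(3.4)). This yields $h_\tau(n)=\sum_{k\le N^{1-\tau},\,P^-(k)>N^{1/u}}\mu(k)\bigl(1_{k\mid n}-1/k\bigr)$, and the correlation of $h_\tau$ with a nilsequence is a \emph{pure Type~I} problem: after Green--Tao's factorisation of $g$ into smooth $\times$ totally equidistributed $\times$ periodic pieces \cite{GT12b}, one only needs Type~I equidistribution estimates for $\sum_n 1_{k\mid n}1_P(n)F(g(kn)\Gamma)$ summed dyadically over $k$, carried out in Propositions~3.1--3.2 by adapting the Type~I part of \cite{GT12a} (with the divisor range pushed to $k\le N^{1-\tau}$). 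No Type~II bilinear sums appear, and your invocation of Vaughan/Heath--Brown would needlessly manufacture them. Likewise the saddle-point description of $\rho(u)$ is the wrong tool in the bounded-$u$ regime: the paper instead uses the Lachand--Tenenbaum identity $\sum_{k\le N,\,P^-(k)>N^{1/u}}\mu(k)/k=\rho(u)\bigl(1+O(u\log(u+1)/\log N)\bigr)$ from \cite{LT15} to recover the density. So the reduction to Gowers uniformity in your proposal is correct and matches the paper, but the specific combinatorial identity that makes the non-correlation estimate tractable --- and which reduces everything to Type~I --- is missing, and the alternatives you name would not straightforwardly do the job.
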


As regards the result of Balog \textit{et al.}~\cite{BBDT12}, we get essentially two major improvements on their works in the case of linear forms :
\begin{itemize}
\item  Theorem \ref{main theorem n*1} gives an asymptotic equivalent which is consistent with the conjectural formula (\ref{conjecture friable binary form})
whereas  (\ref{lower bound BBDT}) only gives a lower bound,
\item when $t\geq 4$,  Formula (\ref{lower bound BBDT}) is  valid with  $\alpha_F=1+\frac{2}{t-2}$ while Theorem~\ref{main theorem n*1} shows that we
can  choose any positive real number for $\alpha_F$.
\end{itemize}

\textbf{Outline  and perspectives} In its primitive form, the nilpotent Hardy-Littlewood method is concerned with arithmetic functions $h$ which 
are equidistributed in residue classes of small moduli and supported on a set of integers with positive asymptotic density.
For such functions, the problem is reduced to show that $h$ is suitably Gowers-uniform to deduce asymptotics for $M_{F_1\cdots F_t}(\mathcal{K};h)$ 
(see the description of the method in Section~\ref{section descriptif}).

In many applications, the function $h$ may not satisfy the two previous conditions. 
The method developed in \cite{GT10, Ma12a, Ma12b}  to overcome this difficulties consists in two steps~:
\begin{itemize}
\item the decomposition of $h$ into a sum of functions which are equidistributed  in residue classes of small moduli ($W$-trick, see [\cite{GT10}, Section~5]),
\item the construction of a pseudorandom measure $\nu$ dominating $h$ in view to apply a transference principle (see [\cite{GT10}, Section~10]).
\end{itemize}

For bounded $u\geq1$, the set of $N^{1/u}$-friable integers  
has  positive density $\rho(u)$ and is well-behaved in arithmetic progressions 
of small common difference (see the work of Fouvry and Tenenbaum~\cite{FT91}). 
In particular, the problem may be directly handled by using the nilpotent Hardy-Litlewood method and showing 
that $h$ has small Gowers-uniformity norms. 
This may be viewed as an application of the impressive results of Matthiesen~\cite{Mattun}
related to the orthogonality beetwen multiplicative functions and  nilsequences.
In the Section~\ref{section preuve} of the present paper, we develop a more direct and simple approach to study
the linear correlations of the friable integers.

It would be interesting to prove Formula~(\ref{conjecture friable binary form}) for unbounded parameters $u$. 
In this case, the sequence of friable integers is too sparse to directly apply  Green-Tao-Ziegler's work. 
A major step to get this generalization would be to construct 
a pseudorandom majorant for $1_{S\left(N^{1/u}\right)}$. 

\section{A brief description of the nilpotent Hardy-Littlewood method}\label{section descriptif}

In this section, we recall  two important arguments of the nilpotent Hardy-Littlewood method.  
The generalized von Neumann theorem -- due to Gowers~\cite{Go01} and Green-Tao~\cite{GT10} -- reduces the estimation of 
$M_{F_1\ldots F_t}(\mathcal{K};h)$ defined in (\ref{mean value h}) to the study of the Gowers
uniformity norm  $\|h\|_{U^{t-1}[N]}$ (see [\cite{GT10}, Appendix~B] 
for a definition of Gowers norm).

\begin{thmext}[\cite{GT10}, Proposition~7.1] \label{neumannn}
Let $t,d,L\geq1$ be some integers. Suppose that $h_1,\ldots, h_t : [0,N]\to\mathbf{R}$ are functions bounded by $1$ and that  
$ F=(F_1,\ldots,F_t):\mathbf{Z}^d\rightarrow\mathbf{Z}^t$ is a system of affine-linear forms  whose non-constant coefficients are bounded by $L$ 
and such that any two forms $F_i$ and $F_j$ are affinely independent over $\mathbf{Q}$. Let  $\mathcal{K} \subset  [-N,N]^d$ be a convex body such that 
$F(\mathcal{K}) \subset  [0,N]^t$.  Suppose also that
\begin{equation*} \min_{1 \leq i \leq t}  \left\| h_i\right\|_{U^{t-1}[N]} \leq \delta
\end{equation*}
for some $\delta > 0$. Then we have
\begin{equation*}
\sum_{\boldsymbol{n} \in \mathcal{K}} \prod_{i=1}^t h_i(F_i(\boldsymbol{n})) =  
o_{\delta}(N^d) + \kappa(\delta)N^{d}
\end{equation*}
where $\kappa(\delta)\to0$ as $\delta\to 0$.
\end{thmext}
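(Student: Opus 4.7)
The plan is to establish the theorem by an iterated Cauchy--Schwarz argument, reducing the correlation sum to a Gowers-type expression in the single function $h_t$ (after relabelling, so that the minimum in the hypothesis is attained at $i=t$). Combined with the pointwise bound $|h_i|\le 1$ on the remaining functions, this will dominate the sum by a constant times $\|h_t\|_{U^{t-1}[N]}^{2^{-(t-1)}}N^d$ plus lower-order errors, which yields the desired conclusion with $\kappa(\delta)$ essentially equal to $\delta^{2^{-(t-1)}}$.

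First I would exploit the affine independence of the $F_i$ to select, for each $i\in\{1,\ldots,t-1\}$, a lattice direction $\boldsymbol m_i\in\mathbf Z^d$ of height $O_{L,d}(1)$ such that the linear part $\dot F_i$ vanishes on $\boldsymbol m_i$ while $\dot F_t(\boldsymbol m_i)$ is a non-zero integer. Indeed, the hypothesis that $F_i$ and $F_t$ are not affinely related forces $\dot F_i$ and $\dot F_t$ to be $\mathbf Q$-linearly independent linear forms on $\mathbf Z^d$, so $\ker\dot F_i\setminus\ker\dot F_t$ is non-empty, and a shortest integer vector there has coordinates controlled by Cramer's rule applied to the coefficient matrix (whose entries are bounded by $L$).

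Next I would perform $t-1$ Cauchy--Schwarz steps. At the $k$-th step, the shift $\boldsymbol n\mapsto\boldsymbol n+\boldsymbol v$ with $\boldsymbol v$ a free variable of the form $\boldsymbol m_k+\text{(admissible perturbation)}$ preserves $F_k$, so after one application of Cauchy--Schwarz in $\boldsymbol v$ the factor $h_k(F_k(\boldsymbol n))$ is squared to something pointwise $\le 1$ and can be dropped, while every other factor $h_j(F_j(\cdot))$ doubles along the $\{0,1\}$-cube of shifts. Iterating $t-1$ times, the $2^{t-1}$-th power of the correlation is bounded by a sum of the shape
\begin{equation*}
\frac{1}{N^{d(t-1)}}\sum_{\boldsymbol n,\boldsymbol v_1,\ldots,\boldsymbol v_{t-1}}\prod_{\omega\in\{0,1\}^{t-1}}h_t\bigl(F_t(\boldsymbol n)+\omega_1\dot F_t(\boldsymbol v_1)+\cdots+\omega_{t-1}\dot F_t(\boldsymbol v_{t-1})\bigr),
\end{equation*}
where each $\boldsymbol v_k$ ranges over a box of side $O_L(N)$. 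Because $\dot F_t$ is a non-zero linear form with $\dot F_t(\boldsymbol m_k)\neq 0$, the change of variables $n'=F_t(\boldsymbol n)$ and $a_k=\dot F_t(\boldsymbol v_k)$ is well-defined up to constants depending only on $L,d,t$, and identifies the above expression with $N^{d}\|h_t\|_{U^{t-1}[N]}^{2^{t-1}}$ up to a multiplicative factor $O_{L,d,t}(1)$.

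The principal technical difficulty is that the convex body $\mathcal K$ is not preserved by the shifts $\boldsymbol v_k$, so boundary errors appear at each Cauchy--Schwarz step. I would handle this by tracking $1_{\mathcal K}$ as an auxiliary weight (after a Lipschitz smoothing at scale $O_L(1)$), and exploiting the fact that the boundary layer $\{\boldsymbol n\in\mathbf Z^d:\mathrm{dist}(\boldsymbol n,\partial\mathcal K)\le O_L(1)\}$ has cardinality $O(N^{d-1})$ by convexity of $\mathcal K\subset[-N,N]^d$; its contribution is $o(N^d)$. These boundary losses, together with lower-order terms produced by the change of variables, are absorbed into the $o_\delta(N^d)$ error. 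The affine (rather than purely linear) nature of the $F_i$ is harmless, since constant terms cancel in every difference and only translate the arguments of $h_t$.
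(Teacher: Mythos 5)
The paper does not actually reprove this statement: its proof consists of checking that the hypotheses of [\cite{GT10}, Proposition~7.1] are met in this situation --- namely that $|F_i(\mathbf{0})|\leq (dL+1)N$, hence $\|F\|_N\ll_{d,t}L$, because $\mathcal{K}\subset[-N,N]^d$ and $F(\mathcal{K})\subset[0,N]^t$, the system being of complexity at most $t-2$ since the forms are pairwise affinely independent --- and then quoting that proposition. You instead set out to reprove the generalized von Neumann theorem itself by iterated Cauchy--Schwarz. That is the right strategy in principle (it is how the result is proved in \cite{GT10}), and the preliminary steps (relabelling so the minimum is at $i=t$, choosing bounded-height vectors $\boldsymbol{m}_k\in\ker\dot F_k$ with $\dot F_t(\boldsymbol{m}_k)\neq0$, eliminating $h_1,\ldots,h_{t-1}$ one at a time) are sound in outline; but there is a genuine gap exactly at the step where all the work lies.

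The gap is the final ``identification''. If the shifts $\boldsymbol{v}_k$ are constrained to preserve $F_k$ (as they must be for your step-$k$ elimination), they range over a box intersected with the rank-$(d-1)$ lattice $\ker\dot F_k\cap\mathbf{Z}^d$, not over a full box of side $O_L(N)$ in $\mathbf{Z}^d$, so the normalization $N^{-d(t-1)}$ in your displayed expression is not what the Cauchy--Schwarz iteration produces; if instead $\boldsymbol{v}_k$ ranges over a full box, $F_k$ is not preserved and the elimination of $h_k$ fails. More importantly, the induced shifts $a_k=\dot F_t(\boldsymbol{v}_k)$ then run over arithmetic progressions $g_k\mathbf{Z}$ with $g_k\mid\dot F_t(\boldsymbol{m}_k)$ (possibly $g_k>1$), carry non-constant multiplicity weights, and the cutoffs ($1_{\mathcal{K}}$ evaluated at the various cube vertices, together with range restrictions) survive the iteration. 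Since the cube product $\prod_{\omega}h_t(\cdot)$ is not pointwise nonnegative, a weighted, progression-restricted average of it cannot be majorized ``up to $O_{L,d,t}(1)$'' by the unweighted full-range average $N^{d}\|h_t\|_{U^{t-1}[N]}^{2^{t-1}}$, nor can the leftover cutoffs simply be dropped; your smoothing/boundary-layer remark concerns only $\partial\mathcal{K}$ and does not address the presence of weights in a signed average. This is precisely what the normal-form change of variables in \cite{GT10} is for: it makes the final expression a genuine Gowers inner product, to which the Gowers--Cauchy--Schwarz inequality applies with $1_{\mathcal{K}}$ and the other leftovers installed as $1$-bounded functions at cube vertices; absent that reduction one needs some substitute (Fourier expansion of the weights, a further Cauchy--Schwarz, or a comparison of progression-restricted $U^{t-1}$ averages with $U^{t-1}[N]$ losing only $O_{L,d,t}(1)$). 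Until such a device is supplied, the claimed bound with $\kappa(\delta)\approx\delta^{2^{-(t-1)}}$ does not follow. Given that the statement is attributed to \cite{GT10}, the economical route --- and the one the paper takes --- is the hypothesis check above rather than a new proof.
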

\begin{proof}
Let $(\boldsymbol{e}_1,\ldots,\boldsymbol{e}_d)$ be 
the canonical basis of $\mathbf{R}^d$ and fix $\boldsymbol{n}=n_1\boldsymbol{e}_1+ \cdot + n_d\boldsymbol{e}_d \in \mathcal{K}$. Then we have
$$\left|F_i(\mathbf{0})\right|\leq \sum_{i=1}^d \left|n_i\right|L+\left|F_i(\boldsymbol{n})\right|
\leq (dL+1)N$$
because $F(\boldsymbol{n})\in[0,N]^t$ and $\mathcal{K}\in[0,N]^d$.
With the definition (1.1) of the norm
$\|\cdot\|_N$ of \cite{GT10}, we therefore have $\|F\|_N\ll_{d,t}L$ and the 
Proposition~7.1 of \cite{GT10} can be used to get the result.
\end{proof}

The inverse theorem for the Gowers norms, proved by Green, Tao and Ziegler~\cite{GTZ12}, exhibits the link between linear correlations and polynomial nilsequences. 
The reader may refer to \cite{GT12b} for definitions and properties of filtered nilmanifolds and polynomial nilsequences. 
\begin{thmext}[\cite{GTZ12}, Theorem~1.3]\label{Gowers inverse}
Let $s\geq0$ be an integer and let $\delta\in]0,1]$.   Then there exists a finite collection $\mathcal{M}_{s,\delta}$ of $s$-step nilmanifolds
$G/\Gamma
$, each equipped with some smooth Riemannian metric   $d_{G/\Gamma}$, as well as positive constants $C(s,\delta)$ and $c(s,\delta)$ with the following property. 
Whenever $N\geq1$ and $h : [0,N]\cap\mathbf{Z} \rightarrow [-1,1]$ is a  function such that
\begin{displaymath} \left\|h \right\|_{U^{s+1}[N]} 
\geq \delta,
\end{displaymath}
there exists a filtered nilmanifold $G/\Gamma \in \mathcal{M}_{s,\delta}$, a 
function $F:G/\Gamma\rightarrow\mathbf{C}$ bounded in magnitude by $1$ and with Lipschitz constant at most $C(s,\delta)$ 
with respect to the metric $d_{G/\Gamma}$ and a polynomial nilsequence $g:\mathbf{Z}\rightarrow G$   such that
\begin{displaymath} \left|
\sum_{0\leq n \leq N} h(n) F\left(g(n) \Gamma\right)\right| 
\geq c(s,\delta) N.
\end{displaymath}
\end{thmext}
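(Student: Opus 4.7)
The plan is to argue by induction on the step $s$. For the base case $s=0$, the $U^1[N]$ norm equals $|\mathbf{E}_{n\in[0,N]} h(n)|$, so the hypothesis $\geq\delta$ directly produces a correlation $\geq\delta N$ with the constant function on the trivial nilmanifold. For $s=1$, I would invoke Plancherel to write $\|h\|_{U^2[N]}^4$ essentially as $\sum_\xi |\hat h(\xi)|^4$; combined with $\|\hat h\|_\infty\leq 1$, this forces some frequency $\alpha$ with $|\hat h(\alpha)|\gg\delta^2$, and then $n\mapsto e(\alpha n)$ is a polynomial $1$-step nilsequence on $\mathbf{R}/\mathbf{Z}$.

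For the inductive step, I would exploit the derivative identity
$$\|h\|_{U^{s+1}[N]}^{2^{s+1}} = \mathbf{E}_{|k|\leq N}\|\Delta_k h\|_{U^s[N-k]}^{2^s},$$
with $\Delta_k h(n) := h(n)\overline{h(n+k)}$. A pigeonhole extracts a set $H\subset[-N,N]$ of density $\gg\delta^{O(1)}$ on which each $\Delta_k h$ has $U^s$-norm $\gg\delta^{O(1)}$. By the inductive hypothesis, for every $k\in H$ there is an $(s-1)$-step nilmanifold $G_k/\Gamma_k$ drawn from a finite family, a polynomial sequence $g_k:\mathbf{Z}\to G_k$ and a Lipschitz $F_k$ with $|\sum_n \Delta_k h(n) F_k(g_k(n)\Gamma_k)|\gg N$. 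By pigeonholing the nilmanifold and the Lipschitz norm one may assume $G_k/\Gamma_k$ and $F_k$ are uniform in $k$.

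The heart of the argument, and the main obstacle, is to antidifferentiate: to synthesize the family $(g_k)_{k\in H}$ into a single $s$-step polynomial sequence $G:\mathbf{Z}\to \widetilde G$ correlating with $h$ itself. I would proceed in three stages. First, apply the Green-Tao quantitative factorization theorem to split each $g_k$ into a smooth, a rational and a totally equidistributed part, after which only the equidistributed piece matters. Second, perform symmetry-type Cauchy-Schwarz manipulations (in $k$ and in extra shift variables) to force the horizontal frequency data of $g_k$ to satisfy an approximate Cauchy equation modulo lower-order commutators, i.e.\ $g_{k_1+k_2}\approx g_{k_1}g_{k_2}$ in the $\operatorname{mod}[G,G]$ sense, together with analogous bracket identities in the central series. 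Third, apply a Balog--Szemer\'edi--Gowers / Freiman-type theorem in the nilpotent setting to upgrade this approximate linearity to genuine polynomial behavior in $k$, realizing the $g_k$ as actual discrete derivatives of a single polynomial sequence $G$ valued in an enlarged nilmanifold obtained by central extension.

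Once $G$ is constructed, one recovers the correlation of $h$ with $F(G(n)\Gamma)$ on a long subprogression from the correlation data for the $\Delta_k h$ by a final Cauchy-Schwarz unwinding; standard localization then passes from the subprogression back to $[0,N]$, at the cost of enlarging the nilmanifold, which is precisely why $\mathcal{M}_{s,\delta}$ is allowed to depend on $\delta$. I expect the algebraic linearization in stages two and three to be by far the most delicate part: the equidistribution input and the symmetry manipulations are essentially quantitative harmonic analysis, but showing that approximate polynomial maps into nilpotent groups are close to honest polynomial maps is where the real difficulty of the Green--Tao--Ziegler proof lies.
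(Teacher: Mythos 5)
You should first note that the paper does not prove this statement at all: it is Theorem~B, quoted verbatim as an external black box from Green--Tao--Ziegler (\cite{GTZ12}, Theorem~1.3), and the only thing the paper does with it is invoke it. So the relevant question is whether your sketch constitutes an independent proof of the inverse theorem itself, and it does not. The base cases are fine ($s=0$ is trivial, $s=1$ is classical Fourier analysis), and the opening of the inductive step --- the derivative identity, pigeonholing a dense set $H$ of shifts $k$ with $\|\Delta_k h\|_{U^s}\gg_\delta 1$, and pigeonholing the finite family of nilmanifolds and the Lipschitz data so that they are uniform in $k$ --- is indeed how the known proof begins. But the step you yourself identify as ``the heart of the argument,'' the antidifferentiation of the family $(g_k)_{k\in H}$ into a single polynomial sequence on an $s$-step nilmanifold correlating with $h$, is precisely the content of the entire Green--Tao--Ziegler paper, and your description of it is not an argument. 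In particular, there is no off-the-shelf ``Balog--Szemer\'edi--Gowers / Freiman-type theorem in the nilpotent setting'' that upgrades an approximate relation $g_{k_1+k_2}\approx g_{k_1}g_{k_2}$ modulo $[G,G]$ to genuine polynomial dependence on $k$; additive-combinatorial input of BSG type enters the known proofs only in a much more limited way (e.g., in linearizing the top-order frequency map $k\mapsto \xi_k$, as in the $U^3$ case), while for general $s$ the passage from derivative data to a degree-$(s+1)$ object goes through the multidegree/degree-rank filtration machinery, the symmetry argument, and quantitative equidistribution theory, none of which is reconstructed or even correctly named by the appeal to a nilpotent Freiman theorem. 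As written, stages two and three of your plan assert the conclusion rather than prove it, so the proposal has a genuine gap exactly at the point where the theorem is hard.

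Two smaller issues would also need repair even if the main step were granted: the identity $\|h\|_{U^{s+1}[N]}^{2^{s+1}}=\mathbf{E}_{|k|\le N}\|\Delta_k h\|_{U^s[N-k]}^{2^s}$ is not literally correct for the interval norms $U^s[N]$ (one must embed $[N]$ in a cyclic group and track normalizations, which affects the $\delta$-dependence of your pigeonholing), and the final ``localization'' from correlation on a subprogression back to $[0,N]$, together with the claim that the resulting nilmanifolds can be taken from a finite collection $\mathcal{M}_{s,\delta}$ with uniform metrics and constants $C(s,\delta)$, $c(s,\delta)$, is asserted but not justified. If your goal was merely to use the statement, the correct move is the one the paper makes: cite \cite{GTZ12} and do not attempt to reprove it.
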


We describe now the application of the Green-Tao method to the functions $1_{S\left(N^{1/u_i}\right)}$. For any parameter of friability $N^{1/u_i}$, 
we consider the balanced function 
\begin{displaymath}
\begin{array}{cccc}h_i:&\mathbf{N}&\rightarrow&[-1,1]\\
&n&\mapsto&1_{S\left(N^{1/u_i}\right)}(n)-\rho(u_i).
\end{array}
 \end{displaymath}
By writing  $1_{S\left(N^{1/u_i}\right)}(n)=h_i(n)+\rho(u_i)$ and using the bound $\rho(u_i)\leq1$, it follows that
\begin{multline*}
  \left|\Psi_{F_1\cdots F_t}\left(\mathcal{K}
 ,N^{1/u}\right)
 -\mathrm{Vol}(\mathcal{K})\prod_{i=1}^t\rho(u_i)\right|
\leq\sum_{\substack{I\subset\{1,\ldots, t\}\\I\neq\emptyset}} \left|\sum_{\boldsymbol{n} \in\mathcal{K}\cap\mathbf{Z}^d
}\prod_{i\in I}h_i(F_i(\boldsymbol{n}))\right|\\+O_d\left(N^{d-1}\right).
\end{multline*}

In view of the inverse theorem, the problem is reduced to prove that, for any $i\in\{1,\ldots, t\}$, the function $h_i$ does not correlate with nilsequences,
namely that the upper bound
\begin{displaymath}
\sum_{n\leq N}h_i(n)F(g(n)\Gamma) =o\left(N\right)
\end{displaymath}
holds for any $(t-2)$-steps nilsequences $F(g(n)\Gamma)$.

\section{Non-correlation with nilsequences}\label{section preuve}
Let  $s\geq0, u_0\geq1$ be some integers and
let  $(G/\Gamma,G_{\mathbf{N}})$ be a filtered nilmanifold of degree $s$. In this section, we show that for any $1$-bounded 
 Lipschitz function $F:G/\Gamma\rightarrow\mathbf{C}$,  any polynomial nilsequence $g:\mathbf{Z}\rightarrow G$  adapted 
 to  $G_{\mathbf{N}}$, $1\leq u\leq u_0$ and $N\geq1$, we have
 \begin{equation}\label{formula to show n*1}
 \sum_{n\leq N}h(n)F(g(n)\Gamma) =o\left(N\left(1+\|F\|_{\textrm{Lip},d_{G/\Gamma}
 }\right)\right)
 \end{equation}
where $h(n):=1_{S\left(N^{1/u}\right)}(n)-\rho(u)$
  and the implicit term  $o(\cdot)$  only depends on $G/\Gamma$ and $u_0$. In view of Theorems~\ref{neumannn} 
  and \ref{Gowers inverse},    this will imply Theorem~\ref{main theorem n*1}.

In \cite{Mattun}, Mathiesen develop a method to bound the correlations  of a multiplicative function
with polynomial nisequences, under some density and growth conditions and some hypothesis of control of the second moment. 
Its approach mix the Montgomery-Vaughan method~\cite{MV77}, the factorisation theorem for polynomial sequences from Green-Tao~\cite{GT12b}
and the fact that the $W$-tricked von Mangoldt function is orthogonal to nilsequences~\cite{GT10}.
 Its main result~[\cite{Mattun}, Theorem 5.1] may be applied directly to 
the multiplicative function $1_{S\left(N^{1/u}\right)}(n)$ to get (\ref{formula to show n*1}), once we have checked it satisfies the assumptions required. 
In the case of the indicator of friable integers and for any $E\geq1$,
the various hypothesis which defined the set $\mathcal{F}_1(E)$ of \cite{Mattun} 
 can be essentially deduced from the estimation 
 \begin{displaymath}\sum_{\substack{n\leq N\\ n\equiv a\pmod{q}}}1_{S\left(N^{1/u}\right)}(n)\underset{N\to\infty}{\sim} \frac{N}{q}\rho(u) 
 \end{displaymath}
 which holds uniformly for $1\leq a,q\leq (\log N)^{E}$ (see \cite{FT91}).

In the rest of this paper, we give a direct and simple method to establish (\ref{formula to show n*1}), 
with a different focus from
\cite{Mattun}.
The starting point 
is the  M\"obius inversion formula in the following form
\begin{displaymath}1_{S\left(N^{1/u}\right)}(n)=\sum_{\substack{P^-(k)>N^{1/u}}}\mu(k)1_{k|n}.
\end{displaymath} 
We  approximate the indicator $1_{k|n}$ by its mean value $\frac{1}{k}$ for $k\leq N^{1-\tau}$ where the parameter $\tau=o(1)\in]1/\log N,1[$ 
will be chosen later. One can write\begin{align*}
\sum_{1\leq n\leq N}
\left(1_{S\left(N^{1/u}\right)}(n)-\rho(u)\right)F(g(n)\Gamma)
=\Sigma_1(F,g)+\Sigma_2(F,g)
\end{align*}
where
\begin{align*}
\Sigma_1(F,g):=
\sum_{1\leq n\leq N}h_{\tau}(n)
F(g(n)\Gamma)\text{ with }h_{\tau}(n)=\sum_{\substack{k\leq N^{1-\tau}\\P^-(k)>N^{1/u}}}\mu(k)\left(1_{k|n}-\frac{1}{k}\right)
\end{align*}
and
\begin{align*}
\Sigma_2(F,g)
:=\sum_{1\leq n\leq N}\left(\sum_{\substack{k> N^{1-\tau}\\P^-(k)>N^{1/u}}}\mu(k)1_{k|n}+
\sum_{\substack{k\leq N^{1-\tau}\\P^-(k)>N^{1/u}}}\frac{\mu(k)}{k}-\rho(u)\right)F(g(n)\Gamma).
\end{align*}
In the definition of the function $h_{\tau}$,  the summation is restricted over the divisors $k\leq N^{1-\tau}$ 
since the contribution from the interval
$N^{1-\tau}<k\leq N$ is negligible (see (\ref{borne sup sigma2}) below).

First, we focus on $\Sigma_{2}(F,g)$. 
In view of the following series of estimations, valid whenever $\tau u<1$,
\begin{align}
\nonumber\sum_{\substack{N^{1-\tau}<k\leq N\\P^-(k)>N^{1/u}}}\frac{\mu^2(k)}{k}&\ll
\sum_{j\geq1}\sum_{N^{1/u}<p_2<\cdots<p_j\leq N}\frac{1}{p_2\cdots p_j} 
\sum_{\max\left(N^{1/u},\frac{N^{1-\tau}}{p_2\cdots p_j}\right)\leq p_1\leq \frac{N}{p_2\cdots p_j}}\frac{1}{p_1}\\
\nonumber&\ll\tau u \sum_{j\geq1}\frac{1}{(j-1)!}\left(\sum_{N^{1/u}<p\leq N}\frac{1}{p}\right)^{j-1}\\
\label{mu2}&\ll \tau u\sum_{j\geq1}\frac{1}{(j-1)!}\left(\log(u)+O\left(1\right)\right)^{j-1}\ll\tau u^2,
\end{align}
we have the upper bound
\begin{align}\label{Sigma23}
\sum_{1\leq n\leq N}\sum_{\substack{k> N^{1-\tau}\\P^-(k)>N^{1/u}}}\mu^2(k)1_{k|n}\ll \tau u^2N.
\end{align}
On the other hand, one can handle the sum over $k\leq N^{1-\tau}$ in $\Sigma_2(F,g)$ by using [\cite{LT15}, Formula~(1.5)] which states that the formula
\begin{equation}\sum_{\substack{k\leq N\\P^-(k)>N^{1/u}}}\frac{\mu(k)}{k}=\rho(u)\left(1+O\left(\frac{u\log(u+1)}{\log N}\right)\right)\label{Sigma21} \end{equation} 
holds for any $\varepsilon>0$ and uniformly for $x\geq2$ and  $1\leq u\leq (\log x)^{3/8-\varepsilon}$.
Finally, 
 (\ref{Sigma23}) and (\ref{Sigma21}) yield
  that
\begin{align}
\Sigma_2(F,g)
\ll&uN\left(\tau u +\frac{\rho(u)\log(u+1)
}{\log N}\right)\label{borne sup sigma2}.\end{align}

In view of the foregoing, it remains to obtain  an upper bound for $\Sigma_1(F,g)$. This is the subject of the following proposition.
\begin{prop}\label{estimation Sigma1}Let $m,s\geq1$ be some integers and let $A>0$ be a real number. There exists a constant  $c(m,s,A)>0$
with the following property. Whenever $Q,N\geq2$ are integers, $\tau\in]0,1/2[$ and $u\geq1$ are such that
$\min(N^{\tau},N^{1/u})\geq (\log N)^{c(m,s,A)}$, $(G/\Gamma,G_{\mathbf{N}})$  is a filtered nilmanifold of degree $s$ and dimension $m$,  $\mathcal{X}$ is a
$Q$-rational Mal'cev 
basis\footnote{The notion of $Q$-rational Mal'cev basis is introduced in [\cite{GT12b}, Definitions~2.1 and 2.4]
as a specific basis of the Lie algebra $\mathfrak{g}$ of $G$.} of $(G/\Gamma,G_{\mathbf{N}})$, $g:\mathbf{Z}\rightarrow G/\Gamma$ 
is a polynomial nilsequence adapted to $G_{\mathbf{N}}$ and $F:G/\Gamma\rightarrow[-1,1]$ is a Lipschitz function, then we have  
\begin{equation}\label{formula Sigma1}
\sum_{1\leq n\leq N}h_{\tau}(n)F(g(n)\Gamma)\leq N Q^{c(m,s,A)}\left(1+\|F\|_{\textrm{Lip},\mathcal{X}}\right)2^u(\log N)^{-A}.
\end{equation}
\end{prop}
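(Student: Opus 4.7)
The plan begins by swapping the order of summation:
\[
\sum_{1\leq n\leq N}h_{\tau}(n)F(g(n)\Gamma)=\sum_{\substack{k\leq N^{1-\tau}\\P^-(k)>N^{1/u}}}\mu(k)\Delta(k),
\]
where $\Delta(k):=\sum_{1\leq n\leq N,\,k\mid n}F(g(n)\Gamma)-\frac{1}{k}\sum_{1\leq n\leq N}F(g(n)\Gamma)$. After substituting $n=km$ in the first inner sum, $\Delta(k)$ is the difference of two normalised averages of the Lipschitz function $F$ along polynomial nilsequences, one along $m\mapsto g(km)\Gamma$ on $[1,N/k]$ and one along $n\mapsto g(n)\Gamma$ on $[1,N]$. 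The proposition then reduces to a uniform bound of the form $|\Delta(k)|\ll(N/k)Q^{c_0(m,s)}(1+\|F\|_{\textrm{Lip},\mathcal{X}})(\log N)^{-A-c_1}$.

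The main analytic input is the Green--Tao factorisation theorem [\cite{GT12b}, Theorem~1.19], applied with a growth function $\mathcal{M}$ chosen below. This produces an integer $M_0\leq Q^{O_{m,s}(1)}\mathcal{M}(Q)^{O_{m,s}(1)}$ and a decomposition $g=\varepsilon g'\gamma$, where $\varepsilon$ is $(M_0,N)$-smooth, $\gamma$ is $M_0$-rational of some period $q\leq M_0$, and $g'$ is totally $\mathcal{M}(M_0)^{-1}$-equidistributed on a sub-nilmanifold $G'/\Gamma'$. Since $F$ is Lipschitz, $\varepsilon$ can be absorbed with a small error, and on each residue class $a\pmod{q}$ one has $F(g(n)\Gamma)\approx F_a(g'(n)\Gamma')$ for a controlled shift $F_a$ of $F$. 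Splitting both sums in $\Delta(k)$ according to residues modulo $q$ reduces the estimation to a comparison of averages of the $F_a$'s along the orbit of $g'$ on arithmetic progressions, a setting controlled by [\cite{GT12b}, Theorem~2.9].

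The decisive arithmetic observation is that the friability hypothesis $P^-(k)>N^{1/u}$ together with the lower bound $N^{1/u}\geq(\log N)^{c(m,s,A)}$ forces $\gcd(k,q)=1$: by choosing $c(m,s,A)$ large enough that $M_0\leq(\log N)^{c(m,s,A)/2}\leq N^{1/u}<P^-(k)$, every prime divisor of $q$ is strictly smaller than every prime divisor of $k$. Multiplication by $k$ therefore permutes the residue classes modulo $q$, so the multiples of $k$ in $[1,N]$ meet each residue class equally often. Applying the quantitative equidistribution of $g'$ on each such class, both normalised averages in $\Delta(k)$ coincide with $\frac{1}{q}\sum_{a}\int_{G'/\Gamma'}F_a$ up to an error of size $Q^{c_0(m,s)}(1+\|F\|_{\textrm{Lip},\mathcal{X}})(\log N)^{-A-c_1}$, and the integral terms cancel in the subtraction. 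Summing over admissible~$k$, the final factor $2^u$ in the proposition emerges from controlling $\sum_{k\leq N,\,P^-(k)>N^{1/u}}\mu^2(k)/k$ together with the residual dependence on $\omega(k)\leq u$ inherited from the factorisation step.

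The principal technical difficulty lies in propagating the factorisation uniformly to the rescaled sequences $g(k\cdot)$ without the parameters $M_0$, $q$ and $G'/\Gamma'$ deteriorating as $k$ varies, while simultaneously ensuring that the rescaled interval length $N/k\geq N^\tau\geq(\log N)^{c(m,s,A)}$ remains long enough for the quantitative equidistribution theorem to yield the required power saving in $\log N$. Both constraints are absorbed into the single hypothesis $\min(N^{\tau},N^{1/u})\geq(\log N)^{c(m,s,A)}$ by taking $c(m,s,A)$ and the growth function~$\mathcal{M}$ sufficiently large.
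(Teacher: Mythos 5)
Your overall scaffold---exchange the order of summation, write the error as $\sum_k\mu(k)\Delta(k)$, and apply the Green--Tao factorisation theorem to~$g$---coincides with the start of the paper's proof, and your observation that $P^-(k)>N^{1/u}$ forces $\gcd(k,q)=1$ is indeed exploited in the paper (to control the "major arc'' contribution $\int_{G'/\Gamma'}F'$, which is exactly where the bound $\left|\sum_n(1_{k\mid n}-1/k)1_P(n)\right|\leq 1$ is used). But the decisive step of your argument is incorrect, and it hides the main difficulty of the whole proposition.

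The gap is the claim that "applying the quantitative equidistribution of $g'$ on each such class, both normalised averages in $\Delta(k)$ coincide.'' Total $\delta$-equidistribution of $g'$ on $\{1,\dots,N\}$ only controls averages over arithmetic progressions of length at least $\delta N$. The first average in your $\Delta(k)$ runs over the progression $k\mathbf{Z}\cap[1,N]$, of length $N/k$, so the hypothesis applies only for $k\leq\delta^{-1}$. After the factorisation one has $\delta\asymp M^{-B}$ with $M\leq(\log N)^{O_{m,s,A}(1)}$, hence $\delta^{-1}$ is polylogarithmic in~$N$. For the overwhelming majority of the range $k\leq N^{1-\tau}$, the progression of multiples of $k$ is far too short, and nothing you have written forces $g'(k\cdot)$ (equivalently, $g'$ restricted to $k\mathbf{Z}$) to equidistribute. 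In other words, equidistribution of a nilsequence on $[1,N]$ does not transfer to its dilates $n\mapsto g'(kn)$; this is precisely what makes Type~I sums for nilsequences nontrivial. Your closing paragraph acknowledges a "principal technical difficulty'' but dismisses it by choosing $c(m,s,A)$ and the growth function $\mathcal{M}$ large, which does not address the issue: no choice of the factorisation parameters makes a progression of length $N^\tau$ have density $\geq\delta$ in $[1,N]$.

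The paper resolves exactly this point by a Type~I diophantine argument in the spirit of Green--Tao (\emph{M\"obius and nilsequences}) and Matthiesen (\emph{Correlations of the divisor function}). One does \emph{not} try to show that $g(k\cdot)$ equidistributes for every~$k$; instead one defines the exceptional set $S_j(\mathcal{T})$ of $k$ for which it fails, and proves the counting bound $\#\left(S_j(\mathcal{T})\cap[K,2K]\right)\leq\mathcal{T}K$ for $K\leq N^{1-\tau}$. The proof of this bound proceeds by contradiction: from many bad $k$ one extracts a nontrivial horizontal character $\psi$, uses Waring-type lemmas to pass from information about $\partial^r(\psi\circ g_k)(0)$ for many $k$ to diophantine information about the coefficients $\beta_r$ of $\psi\circ g$, and then constructs a Lipschitz test function $\eta\circ(q'\psi)$ that violates the assumed total equidistribution of~$g$. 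This is where the extension from Matthiesen's range $K\leq N^{1/2}$ to $K\leq N^{1-\tau}$ (and hence the hypothesis $N^\tau\geq(\log N)^{c(m,s,A)}$) actually enters. That argument is the core of the proposition, and your proposal omits it entirely.
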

Recall that the smooth Riemannian metric $d_{G/\Gamma}$ of  Proposition~\ref{Gowers inverse} is equivalent to the metric $d_{\mathcal{X}}$ (see the 4th 
footnote and Definition~2.2 of \cite{GT12b}).  With the choice  $\tau=\frac{(\log\log N)^{1+\varepsilon}}{\log N}$, it follows from the estimations 
(\ref{borne sup sigma2}) and (\ref{formula Sigma1}) that 
the upper bound 
\begin{displaymath} \sum_{n\leq N}h(n)F(g(n)\Gamma) =o\left(N\rho(u)\left(1+\|F\|_{\textrm{Lip},d_{G/\Gamma}
 }\right)\right)\end{displaymath}
 holds for any $\varepsilon>0$  and uniformly for $1\leq u\leq (\log\log N)^{1-\varepsilon}$. 
 This implies (\ref{formula to show n*1}) since $1\leq u\leq u_0$ is contained in this region for any  $u_0$ which does not depend on $N$.

The rest of the article is devoted to  the proof of Proposition~\ref{estimation Sigma1}. 
The argument follows essentially the proofs of [\cite{GT12a},Theorem~1.1] and \cite{Ma12a}, Theorem~9.1] and we only outline the major differences.
A key point in the proof consists in reducing the problem to establish  the formula (\ref{formula Sigma1}) 
in the case of totally equidistributed polynomial nilsequence $g$, i.e. such that $|P|^{-1}\sum_{n\in P}F(g(n)\Gamma)$ tends to $\int_{G/\Gamma}F$ as $P$ is a 
subprogression such that $|P|\rightarrow +\infty$.

After this reduction, it will be possible to use the following analogue of [\cite{GT12a}, Proposition~2.1] and 
[\cite{Ma12a}, Proposition~9.2].
\begin{prop}\label{non correlation equidistributed}
Let $m,s$ be some positive integers. There exist some constants  $c_0(m,s),c_1(m,s)>0$ with the following property. 
Whenever $Q\geq2$, $N\geq2$  and  $\delta\in]0,1/2[$  such that $\delta^{-c_0(m,s)}\leq N^{\tau}$, $P\subset\{1,\ldots,N\}$ is an arithmetic progression 
of size at least $N/Q$, $(G/\Gamma,G_{\mathbf{N}})$  is a filtered nilmanifold of degree $s$ and dimension $m$,  $\mathcal{X}$ is a  $Q$-rational Mal'cev basis
of $(G/\Gamma,G_{\mathbf{N}})$, $g:\mathbf{Z}\rightarrow G/\Gamma$ is a polynomial 
and $\delta$-totally equidistributed nilsequence\footnote{A sequence $\left(g(n)\Gamma\right)_{n\in\{1,\ldots,N\}}$ is $\delta$-totally equidistributed if we have 
$$\left|\frac{1}{|P|}\sum_{n\in P}F(g(n)\Gamma)\right|\leq \delta\|F\|$$ for all Lipschitz function $F:G/\Gamma\rightarrow\mathbf{C}$ with 
$\int_{G/\Gamma}F=0$ and all arithmetic progressions $P\subset\{1,\ldots,N\}$ of size at least $\delta N$.} adapted to $G_{\mathbf{N}}$ and 
$F:G/\Gamma\rightarrow[-1,1]$ is a Lipschitz function such that $\int_{G/\Gamma}F=0$,  we have  
\begin{equation*}
 \left|\sum_{n\leq N}h_{\tau}(n)1_P(n)F(g(n)\Gamma)\right|\ll \delta^{c_1(m,s)}\|F\|_{\textrm{Lip},\mathcal{X}}QN \left(2^u+\log N\right).
\end{equation*}
\end{prop}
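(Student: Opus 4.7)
\emph{Plan.} Unfolding the definition of $h_\tau$ gives
\begin{equation*}
\Sigma := \sum_{n \le N} h_\tau(n) 1_P(n) F(g(n)\Gamma) = \sum_{\substack{k \le N^{1-\tau} \\ P^-(k) > N^{1/u}}} \mu(k)\,\Bigl( T_k - \tfrac{T}{k} \Bigr),
\end{equation*}
where $T = \sum_{n \in P} F(g(n)\Gamma)$ and $T_k = \sum_{n \in P,\,k\mid n} F(g(n)\Gamma)$. The $T/k$-part is easy: in the non-trivial regime $|P|\ge N/Q\ge\delta N$, total equidistribution yields $|T| \ll \delta \|F\|_{\mathrm{Lip},\mathcal X}|P|$, and the Mertens-type estimate $\sum_{k \le N^{1-\tau},\,P^-(k) > N^{1/u}}|\mu(k)|/k \ll u + 1$ bounds this piece by $O(\delta u \|F\|_{\mathrm{Lip},\mathcal X} N)$, well within the allowed budget.

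For the $T_k$-part the strategy is to split the $k$-range at the threshold $K_0 := (\delta Q)^{-1}$. For $k \le K_0$ the intersection $P \cap k\mathbf Z$ is an arithmetic progression of length at least $\delta N$, so total equidistribution directly bounds $|T_k| \le \delta \|F\|_{\mathrm{Lip},\mathcal X}(|P|/k + O(1))$; summing against $|\mu(k)|$ and using the sifted-sum estimate $\sum_{k \le K_0,\,P^-(k)>N^{1/u}}1/k \ll u$ is enough for this range. For $K_0 < k \le N^{1-\tau}$ the progressions $P \cap k\mathbf Z$ become too short to fall within the equidistribution hypothesis. Here I would dyadically decompose $k \sim K$, apply Cauchy--Schwarz in the $k$-variable, and treat the resulting off-diagonal sum over pairs $(k_1, k_2)$ by appealing to total equidistribution on the progression $P\cap\mathrm{lcm}(k_1,k_2)\mathbf Z$ --- which, for generic pairs in the dyadic block, is comparable in modulus to $K^2 \gg K_0$ so that the joint progression is short enough to control via a direct count but long enough to invoke equidistribution on the dilated polynomial sequence $g(\mathrm{lcm}(k_1,k_2)\,\cdot\,)$ obtained by substituting $n = \mathrm{lcm}(k_1,k_2)m$ and using the factorization theorem of \cite{GT12b}.

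\emph{Main obstacle.} The crux is the range $k > K_0$. The $\delta$-total equidistribution hypothesis, by its very definition, covers only progressions of size $\ge\delta N$, while $|P\cap k\mathbf Z|\approx N/(Qk)$ drops below $\delta N$ precisely when $k$ crosses $K_0$. The trivial bound $|T_k| \le \|F\|_{\mathrm{Lip},\mathcal X}|P|/k$ combined with $\sum_{k > K_0, P^-(k)>y} 1/k \ll u$ is too weak --- it misses the crucial saving of $\delta^{c_1}$ --- so genuine cancellation must be produced, either from the sign oscillation of $\mu(k)$ via bilinear Cauchy--Schwarz in the spirit of \cite{GT12a,Ma12a}, or from a careful equidistribution transfer along dilations $m \mapsto g(km)$. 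The factor $2^u + \log N$ in the final bound arises from Rankin-type bounds on second moments of Möbius sums restricted to $y$-sifted integers, the Euler product $\prod_{p > N^{1/u}}(1 - p^{-s})^{-1}$ scaling exponentially with $u$.
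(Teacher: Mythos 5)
Your sketch correctly isolates the easy ingredients (the $T/k$ part via total equidistribution plus a Mertens bound; the $k\le K_0:=(\delta Q)^{-1}$ range via direct application of the hypothesis to $P\cap k\mathbf Z$), and correctly identifies the obstruction: for $k>K_0$ the progression $P\cap k\mathbf Z$ is shorter than $\delta N$, so the $\delta$-total equidistribution hypothesis does not apply directly. But the route you propose from there --- bilinear Cauchy--Schwarz in $k$ and off-diagonal equidistribution on $P\cap\mathrm{lcm}(k_1,k_2)\mathbf Z$ --- is a Type~II argument, and it is neither what the paper does nor clearly viable as sketched. After Cauchy--Schwarz, the diagonal terms already lose the whole saving unless some nondegeneracy is used, and the off-diagonal terms do not reduce to a single sum over $P\cap\mathrm{lcm}(k_1,k_2)\mathbf Z$; rather one gets correlations of the form $\sum_m F(g(k_1m))\overline{F(g(k_2m))}$, whose treatment would require equidistribution of the pair $(g(k_1\cdot),g(k_2\cdot))$ in $G\times G$, which is not a consequence of $\delta$-total equidistribution of $g$ alone.

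The paper avoids this entirely by staying in Type~I territory, following \cite{GT12a}, Part~3 and \cite{Ma12a}, Lemma~9.3. Because $h_\tau$ truncates at $k\le N^{1-\tau}$ and one restricts to the dyadic range $\mathcal T N\le 2^j\le N$, the inner sums in $\sum_{2^j/k<n\le 2^{j+1}/k}1_P(kn)F(g(kn)\Gamma)$ always have length $\gg\mathcal T N^{\tau}$. Rather than bound each such sum individually against the hypothesis, one defines the exceptional set $S_j(\mathcal T)$ of $k$ for which the local sum is larger than $\mathcal T\,2^j/k$, and shows $\#(S_j(\mathcal T)\cap[K,2K])\le\mathcal T K$. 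This is proved by contradiction: if too many $k$ are bad, the quantitative equidistribution theory of \cite{GT12b} (applied to the dilated polynomial sequences $g_k(n)=g(kn)$, together with the Waring-type Lemmas~3.2, 3.3 of \cite{GT12a}) produces a nontrivial horizontal character $\psi$ and a small $q'$ with $\|q'(\psi\circ g)(n)\|_{\mathbf R/\mathbf Z}\le 1/10$ on a progression $\{1,\ldots,\mathcal T^{c_5}2^j\}$; testing $\eta\circ(q'\psi)$ on this progression --- which has length $\ge\mathcal T^{c_5+1}N\ge\delta N$ once $c_1$ is small enough --- contradicts $\delta$-total equidistribution. Thus the hypothesis is only ever invoked on a single long progression, and no bilinear estimate is needed. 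Note also that the constraint is $K\le N^{1-\tau}$ rather than the $K\le N^{1/2}$ of \cite{Ma12a}, which is precisely why Type~I suffices here. Finally, the factor $2^u+\log N$ does not come from a Rankin-type second moment: the $2^u$ is the pointwise bound $|h_\tau(n)|\le\#\{k\mid n: P^-(k)>N^{1/u}\}\le 2^u$, used for the small range $n\le N^{1-\tau/2}$, and the $\log N$ comes from $\sum_{2^i\le N^{1-\tau}}\sum_{k\sim 2^i}\mathcal T/k\ll\mathcal T\log N$ in the dyadic decomposition.
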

\begin{proof}[Proof that Proposition~\ref{non correlation equidistributed} implies Proposition~\ref{estimation Sigma1}]
Following some ideas of \cite{GT12a}, we can assume, without loss of generality, that  
$\|F\|_{\textrm{Lip},\mathcal{X}}=1$ and $Q\leq\log N$. Let $B>0$ be a parameter to be specified at the end of the proof. Applying Theorem~1.19 of \cite{GT12b},
there exists an integer $M$ satisfying $\log N\leq M\leq (\log N)^{c(m,s,B)}$
such that we can write the decomposition  $g=\varepsilon g'\gamma$ where \begin{enumerate}
 \item $\varepsilon\in\textrm{poly}(\mathbf{Z},G_{\mathbf{N}})$ is $(M,N)$-smooth (see [\cite{GT12b}, Definition~1.18]),
\item $g'\in\textrm{poly}(\mathbf{Z},G_{\mathbf{N}})$ takes values in a rational subgroup $G'\subseteq G$ with Mal'cev basis $\mathcal{X}'$ and 
$(g'(n))_{n\leq N}$ is $M^{-B}$-totally 
equidistributed in $G'/(G'\cap\Gamma)$ for the metric $d_{\mathcal{X}}$ (see [\cite{GT12b}, Definition~1.10]),
\item $\gamma\in\textrm{poly}(\mathbf{Z},G_{\mathbf{N}})$ is periodic of period $q\leq M$ and $\gamma(n)$ is $M$-rational  for any $n\in\mathbf{Z}$  
(see [\cite{GT12b}, Definition~1.17]).
\end{enumerate} 

Next, we reproduce  the arguments of Green and Tao based on partitioning and pigeonholing  and we use the properties of periodicity and smoothness 
of $\gamma$ and  $\varepsilon$. 
In this way,  the problem is reduced to show that
\begin{equation}\label{intermediary estimat Sigma1}
\left|\sum_{1\leq n\leq N}h_{\tau}(n)1_{P}(n)F'(g'(n)\Gamma')\right|\ll 2^{u}N/(M^2(\log N)^{2A})
\end{equation}
where $P$ is a subprogression such that 
$|P|\geq \frac{N}{2M^2(\log N)^A}$,  $(G'/\Gamma',G'_{\mathbf{N}})$ is a 
$m$-dimensional nilmanifold of degree $s$ with $M^{C_1(m,s)}$-rational Mal'cev basis 
$\mathcal{X}'$, $F':G'/\Gamma'\rightarrow[-1,1]$ is a Lipschitz function such that $\|F'\|_{\textrm{Lip},\mathcal{X}'}\leq M^{C_1(m,s)}$
and $g'\in\textrm{poly}(\mathbf{Z},G'_{\mathbf{N}})$ is $M^{-C_2(m,s)B+C_1(m,s)}$-totally equidistributed, 
for some constants $C_1(m,s),C_2(m,s)>0$.

If we suppose that $\int_{G'/\Gamma'}F'=0$, then we can apply Proposition~\ref{non correlation equidistributed} 
to the sequence $g'$,  
with   $M^{C_1(m,s)}$ (resp. $M^{-C_2(m,s)B+C_1(m,s)}$) as parameter of rationality (resp. totally equidistribution).
Taking $B$, 
$C_1(m,s)$ and $c(m,s,A)$ sufficiently large, the hypothesis on the size of $P$ and $\delta $ are satisfied and we get 
(\ref{intermediary estimat Sigma1}).

We can reduce to this last case by writing $F'=(F'-\int_{G'/\Gamma'}F')+\int_{G'/\Gamma'}F'$. 
Indeed, we can observe that $\int_{G'/\Gamma'}F'$ is bounded by $1$ and,  since the common difference $q$ of $P$ 
satisfies  $q< N^{1/u}$, then we get some multiplicative independence, when $P^-(k)>N^{1/u}$ :
$$\left|\sum_{1\leq n\leq N}\left(1_{k|n}-\frac{1}{k}\right)1_{P}(n)\right|\leq1. $$ We deduce the major arc estimate
\begin{align*}
\left|\sum_{1\leq n\leq N}h_{\tau}(n)1_{P}(n)\int_{G'/\Gamma'}F'\right|
&\leq\sum_{\substack{k\leq N^{1-\tau}\\P^-(k)>N^{1/u}}}\left|
\sum_{1\leq n\leq N}\left(1_{k|n}-\frac{1}{k}\right)1_{P}(n)\right|\\
&\leq \left|\left\{k\leq N^{1-\tau}:P^-(k)>N^{1/u}\right\}\right|\\
&\ll u\frac{N^{1-\tau}}{\log N}
\end{align*}
which implies (\ref{intermediary estimat Sigma1}) under the condition $N^{\tau}\geq (\log N)^{c(m,s,A)}$.
\end{proof}

\begin{proof}[Proof of Proposition~\ref{non correlation equidistributed}] We essentially follow the proof of Proposition~9.2 of \cite{Ma12a}
and we suppose that $\|F\|_{\textrm{Lip},\mathcal{X}}=1$ and $Q\leq \delta^{-c_1(m,s)}$. 
 For $\mathcal{T}\in]0,1/2[$ and $j\geq1$, we define $S_j(\mathcal{T})$ as the set of the integers $k$ satisfying
\begin{displaymath}
 \left|\sum_{2^j/k<n\leq 2^{j+1}/k}1_P(kn)F(g(kn)\Gamma)\right|> \mathcal{T}\frac{2^j}{k}.
\end{displaymath}
From the estimation 
$$\sum_{\substack{k\leq N\\ P^-(k)>N^{1/u}}}\frac{\mu^2(k)}{k}
\ll \sum_{j\geq1}\frac{1}{j!}\left(\sum_{N^{1/u}<p\leq N}\frac{1}{p}\right)^{j}
\ll  u
$$
and the trivial bound $\left|\left\{k|n: P^-(k)>N^{1/u}\right\}\right|\leq 2^u$ valid whenever $n\leq N$,
we can see that $h_{\tau}(n)\ll 2^u$. 
It follows that
\begin{align*}
\left|\sum_{n\leq N^{1-\tau/2}}h_{\tau}(n)1_P(n)F(g(n)\Gamma)\right|\ll N^{1-\tau/2}2^u
\end{align*}
and therefore we concentrate on the integers  $n> N^{1-\tau/2}$. 

Since the nilsequence $(g(n)\Gamma)_{n\in\{1,\ldots,N\}}$ is $\delta$-totally equidistributed, 
the contribution from the part $\sum_k\frac{\mu(k)}{k}$ of $h_{\tau}$ can be handled 
by observing that  we have
\[
\sum_{\substack{k\leq N^{1-\tau}\\ P^-(k)>N^{1/u}}}\frac{\mu^2(k)}{k}
\left|\sum_{N^{1-\tau/2}\leq n \leq N}1_{P}(n)F(g(n)\Gamma)\right|\ll u\delta N.
\]

For the remaining terms $\sum_k\mu(k)1_{k|n}$ of $h_{\tau}$, we follow
the proof of Proposition~9.2 of \cite{Ma12a}.
We  make a dyadic splitting over the variables $k$ and $n$ and we drop off the condition $P^-(k)>N^{1/u}$ :   
\begin{align*}
 &\sum_{\substack{k\leq N^{1-\tau}
 }}\left|\sum_{N^{1-\tau/2}/k<n\leq N/k}1_P(kn)F(g(kn)\Gamma)\right|\\
 \ll& \sum_{2^i\leq N^{1-\tau}}\sum_{\frac{N^{1-\tau/2}}{2}\leq 2^j\leq N}2^j\left(\sum_{\substack{2^{i}\leq k< 2^{i+1}
}}\frac{\mathcal{T}}{k}
+\sum_{\substack{2^{i}\leq k< 2^{i+1}
\\k\in S_j(\mathcal{T})}}\frac{1}{k}\right)\\
\ll& \sum_{\frac{N^{1-\tau/2}}{2}\leq 2^j\leq N}2^j\left(
\mathcal{T}\log N+\sum_{2^i\leq N^{1-\tau}}\frac{1}{2^i}\#\left( S_j(\mathcal{T})\cap\left[2^{i},2^{i+1}\right]\right)\right).
\end{align*}

Put $\mathcal{T}:=\delta^{c_1(m,s)}\leq Q^{-1}$ for a constant $c_1(m,s)>0$ sufficiently small. In the previous sum, 
the contribution of the
 range $\frac{N^{1-\tau/2}}{2}\leq 2^j\leq \mathcal{T} N$ is negligible and may be bounded by the trivial inequality.
 
The rest of the proof consists in  showing that, if $K\leq N^{1-\tau}$, then we have 
\begin{equation}\label{estimation s delta c}
\# \left(S_j(\mathcal{T})\cap[K,2K]\right)\leq \mathcal{T}K
\end{equation}
whenever $\mathcal{T}N\leq 2^j\leq N$.

The estimate (\ref{estimation s delta c}) is the analogue of [\cite{Ma12a}, Lemma~9.3]  under the constraint $K\leq N^{1-\tau}$ rather than $K\leq N^{1/2}$ 
and in the special case $\overline{W}=1$ and $b=0$.
To achieve this, we follow the discussion of
Type I case of [\cite{GT12a}, Part~3] and we suppose for contradiction that (\ref{estimation s delta c}) does not hold for some  $K\leq N^{1-\tau}$ and 
$\mathcal{T}N
\leq 2^j\leq N$. By reproducing  their arguments, we observe the existence 
of a non-trivial horizontal character $\psi$ with magnitude $0<|\psi|\leq \mathcal{T}^{-c_2(m,s)}$ such that, for any $r\geq1$ and for at least  
$\mathcal{T}^{c_2(m,s)}K$ values of $k$, we have
\begin{equation*}
\|\partial^r(\psi\circ g_k)(0)\|_{\mathbf{R}/\mathbf{Z}}\leq\mathcal{T}^{-c_2(m,s)}\left(K/2^j\right)^{r}
\end{equation*} 
where $g_k(n)=g(kn)$,
which is the analogue of the formula~(3.7) of \cite{GT12a}.

By Lemma~3.2 and 3.3 of \cite{GT12a} -- consequences of Waring's theorem -- it follows that 
there exists an integer $q\ll_s1$ and at least $\mathcal{T}^{c_3(m,s)}K^r$ integers $l\leq 10^sK^r$ such that
\begin{equation*}
\|ql\beta_r\|_{\mathbf{R}/\mathbf{Z}}\leq \mathcal{T}^{-c_3(m,s)}(K/2^j)^r
\end{equation*}
where the $\beta_r$'s are  defined by \begin{equation}\label{definition beta}
\psi\circ g(n)=\beta_s n^s+\cdots+\beta_0.\end{equation}

To deduce some diophantine information about the $\beta_r$'s, we  invoke Lemma~3.2 of \cite{GT12b} 
in an analogous way as \cite{GT12a} after checking that the hypothesis are satisfied. It suffices to see that 
$r\geq1$ and $\frac{\mathcal{T}^{2c_3(m,s)}}{10^s}\gg N^{-\tau}\geq\left(\frac{K}{2^j}\right)^r$ if the constant $c_1(m,s)$ is chosen sufficiently small. 
It results that there exists  $q'\leq \mathcal{T}^{-c_4(m,s)}$ such that 
\begin{displaymath}
\|q'\beta_r\|_{\mathbf{R}/\mathbf{Z}}\leq\mathcal{T}^{-c_4(m,s)}2^{-rj}
\end{displaymath}
for any integer $r\geq1$. 
By  the definition~(\ref{definition beta}), we get the existence of $c_5(m,s)>0$ sufficiently 
large such that $q'\leq \mathcal{T}^{-c_5(m,s)}$ and 
\begin{equation}\label{derniere estimation diophante}
\|q'(\psi\circ g)(n)\|_{\mathbf{R}/\mathbf{Z}}\leq 1/10
\end{equation}for any $n\leq\mathcal{T}^{ c_5(m,s)}2^j$.

Let $\eta:\mathbf{R}/\mathbf{Z}\longrightarrow[-1,1]$ be a Lipschitz function of norm $O(1)$, mean value zero, and equal to $1$ 
on $[-1/10,1/10]$ so that
\begin{displaymath}
\int_{G/\Gamma}\eta\circ (q'\psi)=0\qquad\text{and}\qquad \|\eta\circ( q'\psi)\|_{\textrm{Lip},\mathcal{X}}\leq \mathcal{T}^{-c_5(m,s)}.
\end{displaymath}
It follows from  (\ref{derniere estimation diophante}) that we have
\begin{displaymath}
\left|\sum_{n\leq\mathcal{T}^{ c_5(m,s)}2^j}\eta (q'\psi(g(n)\Gamma))\right|\geq \mathcal{T}^{ c_5(m,s)}2^j
>\delta
\|\eta\circ( q'\psi)\|_{\textrm{Lip},\mathcal{X}}\mathcal{T}^{c_5(m,s)}2^j
\end{displaymath}
whenever $c_1(m,s)$ is sufficiently small. This contradicts the hypothesis that   
$(g(n))_{n\leq N}$ is $\delta$-totally  equidistributed,  the set  of integers less than   $\mathcal{T}^{ c_5(m,s)}2^j$ 
being an  arithmetic progression of size at least $\delta N$ whenever $c_1(m,s)$ is sufficiently small since $2^j\geq\mathcal{T}
N$.
\end{proof}

\subsection*{Acknowledgements} The author would like to thank Trevor Wooley for his suggestion to study this method, 
R\'egis de la Bret\`eche, Fran\c cois Hennecart and Anne de Roton 
for their interest for this work, and  his Ph.D. advisor C\'ecile Dartyge for her continuous support.
The major part of this work were completed while the first author was a Ph.D. student at Universit\'e de Lorraine. He put 
the finishing touch while he was a postdoctoral fellow at Aix-Marseille Universit\'e.

\end{document}